\documentclass[preprint,12pt]{elsarticle}
\usepackage{amsmath}
\usepackage{amsthm}
\newtheorem{definition}{Definition}[subsection]
\newtheorem{lemma}{Lemma}[subsection]
\newtheorem{example}{Example}[subsection]
\newtheorem{corollary}{Corollary}[subsection]
\newtheorem{remark}{Remark}[subsection]
	\usepackage{orcidlink}
\usepackage{amssymb}
\usepackage{hyperref}
\usepackage{float}
\usepackage{tabularx} 
\journal{Chaos, Solitons \& Fractals}

\begin{document}

	\begin{frontmatter}
		
		%% Title, authors and addresses
		
		%% use the tnoteref command within \title for footnotes;
		%% use the tnotetext command for theassociated footnote;
		%% use the fnref command within \author or \affiliation for footnotes;
		%% use the fntext command for theassociated footnote;
		%% use the corref command within \author for corresponding author footnotes;
		%% use the cortext command for theassociated footnote;
		%% use the ead command for the email address,
		%% and the form \ead[url] for the home page:
		%% \title{Title\tnoteref{label1}}
		%% \tnotetext[label1]{}
		%% \author{Name\corref{cor1}\fnref{label2}}
		%% \ead{email address}
		%% \ead[url]{home page}
		%% \fntext[label2]{}
		%% \cortext[cor1]{}
		%% \affiliation{organization={},
			%%             addressline={},
			%%             city={},
			%%             postcode={},
			%%             state={},
			%%             country={}}
		%% \fntext[label3]{}
		
		\title{A Descriptive Perspective on Devaney's Chaos and Some Results on Topologically Conjugate Systems}
		
		%% use optional labels to link authors explicitly to addresses:
		%% \author[label1,label2]{}
		%% \affiliation[label1]{organization={},
			%%             addressline={},
			%%             city={},
			%%             postcode={},
			%%             state={},
			%%             country={}}
		%%
		%% \affiliation[label2]{organization={},
			%%             addressline={},
			%%             city={},
			%%             postcode={},
			%%             state={},
			%%             country={}}

	\author{Fatih Uçan \corref{cor1} \orcidlink{0000-0002-6975-9408}} 
	\ead{fatihucan@ktu.edu.tr}
	
	\author{Tane Vergili \orcidlink{0000-0003-1821-6697}}
		\ead{tane.vergili@ktu.edu.tr}
		
	\affiliation{organization={Karadeniz Technical University, Department of Mathematics},
		city={Trabzon},
		postcode={61000},
		country={Türkiye}}

		%% Abstract
		\begin{abstract}
	In this study, Devaney’s chaos conditions are revisited within the framework of descriptive proximity. The concepts of descriptive transitivity, the density of descriptive periodic objects, and descriptive sensitivity are defined. The most notable finding of the study is that Banks Theorem, which establishes the hierarchy among these conditions in classical topology, does not generally hold in the descriptive perspective, and some of the concepts above remain invariant under topological conjugacy certain conditions.
		\end{abstract}

		%% Keywords
		\begin{keyword}
Descriptive Instability  \sep Chaotic Dynamical Systems  \sep Image Analysis \sep Topological Conjugacy

%PACS codes here, in the form: \PACS code \sep code
			
			%% MSC codes here, in the form
			
\MSC 37B02  \sep 54E05  \sep 54H30 
			%% or \MSC[2008] code \sep code (2000 is the default)
			
		\end{keyword}
		
		\end{frontmatter}
	
	%% Add \usepackage{lineno} before \begin{document} and uncomment 
		%% following line to enable line numbers
		%% \linenumbers
		
		%% main text
		%%
		
		%% Use \section commands to start a section
		\section{Introduction}
		\label{sec1}
		%% Labels are used to cross-reference an item using \ref command.

		Given metric space $X$, a map $f: X \rightarrow X$ is said to be chaotic in the sense of Devaney  for the discrete dynamical system $(X,f)$ if it has sensitive dependent on initial conditions, a dense set of periodic points, and is topologically transitive \cite{devaney2018introduction}. The conditions of Devaney's chaos are well-suited for the topological perspective and  continue to be investigated \cite{LIANG2025116826,MEZZI2025109571}. Furthermore, these conditions are interdependent; as demonstrated in \cite{banks1992devaney}, the density of periodic points and topological transitivity together imply sensitive dependence on initial conditions, rendering the first condition redundant in many cases. A classical example is Arnold's Cat transformation, which exhibits strong mixing properties, dense periodic points, and topological transitivity on the torus \cite{Arnold1968}. Due to these properties, it is widely used as a benchmark model in the study of chaotic dynamical systems.
		In the study of dynamical systems, certain properties, such as Devaney’s criteria for chaos, remain invariant under topological conjugacy. Two systems, denoted by $(X, f)$ and $(Y, g)$, are considered to be topologically semi-conjugate if there exists a continuous and surjective map $h: X \to Y$ that satisfies the commutativity condition
		$h \circ f = g \circ h$. If $h$ is also a homeomorphism, then $f$ and $g$ are said to be topologically conjugate \cite{morris1989topology}. Topological conjugacy provides a fundamental bridge between two dynamical systems. It implies that a complex system shares the same dynamical characteristics as its conjugate, which is often a simpler or more well-understood system. Consequently, conjugacy allows for the analysis of intricate dynamics by transforming them into a more tractable form without loss of essential information. \newline

		In a metric space, the proximity of two subsets is defined by their distance being zero. In the more general setting of topological spaces, two sets are considered close if their closures have a non-empty intersection. James F. Peters takes this concept of proximity to a new level \cite{peters2007near} and addressed the question: 'Can two subsets be considered close even if they are not spatially near, provided that they share common descriptive features?' The concept of "descriptive proximity" has been introduced into the literature to describe the situation where entities are spatially remote while descriptively near.\\
		
		Assume that the elements of a dynamical system consist of non-abstract points; in other words, they possess descriptive features that are vector-valued. In this framework, these elements are referred to as 'objects.' Under the iteration of the dynamical system, these objects modify their descriptive features accordingly. Consequently, the evolution of these descriptive characteristics can be analyzed to determine whether the underlying motion exhibits chaotic behavior. This work investigates the conditions under which a function exhibits chaos in a descriptive sense. To this end, descriptive transitivity, the density of descriptive periodic objects, and  descriptive sensitivity are examined, and illustrative examples are provided. Banks' Theorem states that in the classical definition of chaos, topological transitivity and the density of periodic points together imply sensitive dependence on initial conditions. However, this is not the case for the construction of descriptively chaotic functions: in a descriptive sense, transitivity and the density of periodic objects do not necessarily imply descriptive sensitivity. 
		The examples in the table following are given for this purpose.

\begin{table}[h]
	\centering
	\renewcommand{\arraystretch}{1.5}

	\begin{tabularx}{\textwidth}{| >{\raggedright\arraybackslash}p{4.5cm} | >{\centering\arraybackslash}X | >{\centering\arraybackslash}X | >{\centering\arraybackslash}X |} \hline
		
		& \textbf{Example \ref{exTQ}} & \textbf{Example \ref{exTI}} & \textbf{Example \ref{doublingsensitve}} \\ \hline
		
		Topological transitive & $\mathsf{X}$ & \checkmark & \checkmark \\ \hline
		
		Descriptively transitive & \checkmark & \checkmark & \checkmark \\ \hline
		
		 Dense periodic objects (points) set & \checkmark & $\mathsf{X}$ &\checkmark \\ \hline
		
		Dense descriptive periodic objects set & \checkmark & \checkmark & \checkmark \\ \hline

	\end{tabularx}
\end{table}

		\section{Preliminaries}
		Let us assume that the space $X$ (or, roughly speaking, the set $X$) consists of non-abstract elements, that is, its elements possess measurable properties. Each measurable property here corresponds to a function $\phi: X \to \mathbb{R}$.  For example, if $X$ is a digital image, then for a pixel $x \in X$, $\phi(x)$ could represent the color, brightness, or intensity of that pixel. Here, the function $\phi$ is called a probe function on $X$, and the value $\phi(x) \in \mathbb{R}$ is referred to as the feature value of the point $x$. Elements of the set may have more than one descriptive feature \cite{peters2013near}. In this study, we will assume that these features are finite (i.e., there are a finite number of probe functions $\phi_1, \phi_2, \ldots, \phi_n$ defined on $X$).
		Thus, for an element $x \in X$, the vector $\Phi(x) = (\phi_1(x), \ldots, \phi_n(x)) \in \mathbb{R}^n$ is the feature vector of the point $x$, and for $A \subset X$, 
		$\Phi(A) =\{\Phi(a)   \, \, |\, a\in A\}\subseteq \mathbb{R}^n$ is the set of feature vectors of the subset $A$.  The descriptive similarity of subsets A and B is defined by the non-empty intersection of the feature value sets $\Phi(A)$ and $\Phi(B)$ \cite{naimpally2013foreword}. Two subsets  $A$ and $ B$  are descriptively near, denoted by $A \delta_{\Phi} B$, if their corresponding sets of feature vectors have a non-empty intersection, i.e.,
		
		\[  A \delta_{\Phi} B \ \ \ :\Leftrightarrow \ \ \ \Phi(A) \cap \Phi(B) \neq \emptyset.
\]	
	\noindent  
If $\Phi(A)$ and $\Phi(B)$ do not overlap, we say that $A$ and $B$  are descriptively far which is denoted by $A\not{\delta}_{\Phi}B$ \cite{book}. This method allows us to examine whether sets are descriptively similar  or not, despite being spatially far, owing to their shared descriptive features \cite{naimpally2013topology}. Additionally, it's clear that the descriptive proximity relation satisfies reflexivity and symmetry properties.
\\

\noindent Let $X$ be a set, such as a genus-2 surface, equipped with a specific descriptive property.  Consider $W = \{ A, A^{\prime}, B, B^{\prime}, C, C^{\prime} \} \subset 2^X $ with a probe function $\Phi=\phi : W \rightarrow \mathbb{R} $ such that the following color wavelengths are assigned respectively
\begin{align*}
	\Phi(A) &= 617 \text{ nm}, \\
	\Phi(A^{\prime}) &= 510 \text{ nm}, \\
	\Phi(B) &= 639 \text{ nm}, \\
	\Phi(B^{\prime}) &= 411 \text{ nm}, \\
	\Phi(C) &= 480 \text{ nm}, \\
	\Phi(C^{\prime}) &= 617 \text{ nm}.
\end{align*}

\begin{figure}[H]
	\centering
	\includegraphics[width=1\linewidth]{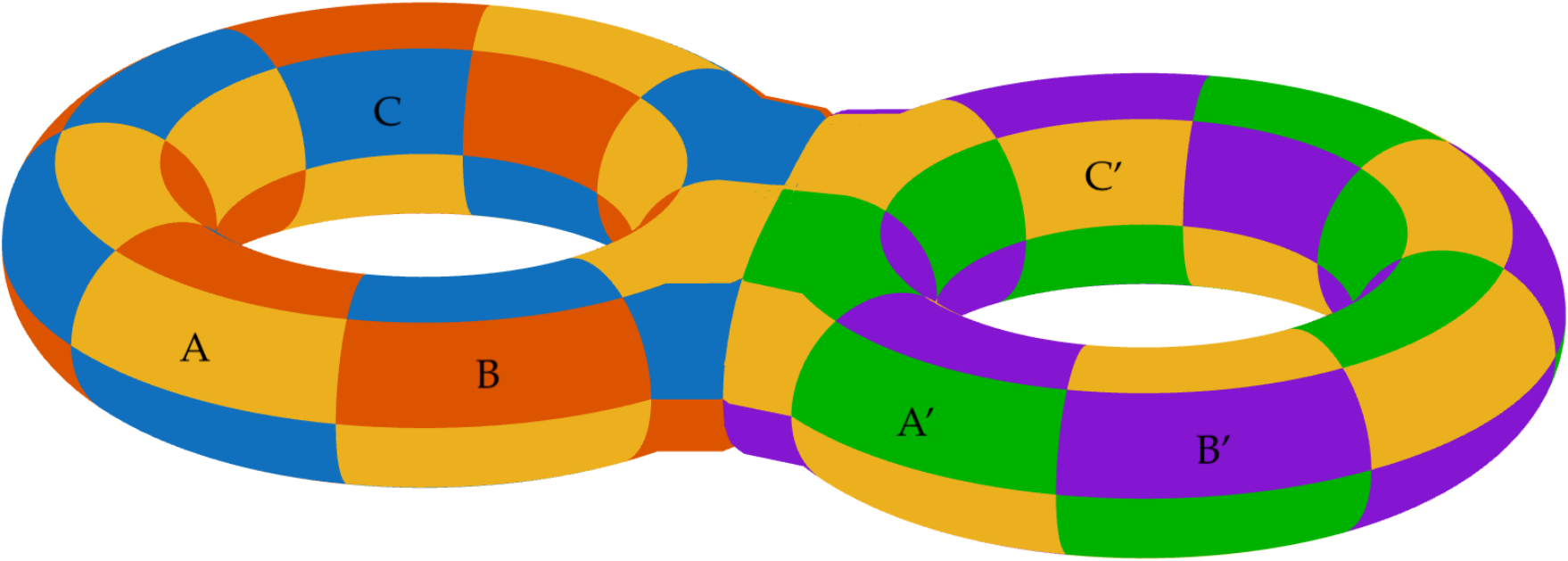}
	\caption{Genus-2 like a surface}
	\label{fig:toruss}
\end{figure}
\noindent The comparison of objects results is presented in the table below.

%% Use \subsubsection, \paragraph, \subparagraph commands to 
%% start 3rd, 4th and 5th level sections.
%% Refer following link for more details.
% TODO: \usepackage{graphicx} required

%% https://en.wikibooks.org/wiki/LaTeX/Document_Structure#Sectioning_commands

\begin{table}[H]
	\centering
	\renewcommand{\arraystretch}{1.2} % Satırları biraz ferahlatır
	\begin{tabular}{c | c c}
		
		\textbf{Descriptive Near Sets} &  \textbf{Descriptive Far Sets} \\ \hline
		$A \delta_{\Phi} A$             & $A \not\delta_{\Phi} A'$ 	& $B \not\delta_{\Phi} B'$ \\
		$A \delta_{\Phi} C'$            &  $A' \not\delta_{\Phi} B'$ 	& $B \not\delta_{\Phi} C$ \\
		$A' \delta_{\Phi} A'$           & $A' \not\delta_{\Phi} C'$  & $B \not\delta_{\Phi} C'$ \\
		$B \delta_{\Phi} B$             & $A \not\delta_{\Phi} B'$ 	& $B' \not \delta_{\Phi} C' $  \\
		$C \delta_{\Phi} C$             & $A \not\delta_{\Phi} C'$ &$B \not\delta_{\Phi} A'$  \\
		
		$C' \delta_{\Phi} C'$           & $A \not\delta_{\Phi} B$ &$C \not\delta_{\Phi} B'$\\
		$B' \delta_{\Phi} B'$           & $A \not\delta_{\Phi} C$ 	& $C \not\delta_{\Phi} A'$ \\
		&  $C \not\delta_{\Phi} C'$ &

	\end{tabular}
	\caption{Descriptive comparison objects of W.}
	\label{tab:near_far_sets}
\end{table}
  \noindent Di Concilio et al. \cite{Di_Concilio2018-oy} introduce the descriptive intersection set $A\underset{\Phi}{\cap} B$, which is formally defined as:$$ A\underset{\Phi}{\cap} B=\{ x\in A\cup B \, | \, \Phi(x)\in \Phi(A) \cap \Phi(B)\} $$for any $A, B \in 2^X$. This mathematical construction serves as a robust tool for identifying common characteristics between objects in sets $A$ and $B$, even when they do not overlap spatially. \\

		\noindent  Given two set of objects $X$ and $Y$ together with probe functions $\Phi_1 : X \rightarrow \mathbb{R}^n$ and $\Phi_2 : Y \rightarrow \mathbb{R}^n$, respectively, a function $h: X\rightarrow $ Y is said to be descriptively continuous, provided  $A\delta_{\Phi_1} B$ implies $ h(A)\delta_{\Phi_2} h(B)$ for all $A,B\in 2^X$ \cite{HAIDER2021111237}. \\
		
		\noindent In the case where $X$ is a topological space, it is a common misconception that continuity implies descriptive continuity, but this is not the case. For instance in standart metric space given as the function $f:\mathbb{R} \rightarrow \mathbb{R},  f(x)=x+1$ and the probe function $\Phi=\phi: \mathbb{R} \rightarrow \mathbb{R}$ 
		\[\phi(x) =
		\begin{cases} 
			r_1, &  x\in [0,1] \\
			r_2,&   otherwise
		\end{cases}
		\]
		where $r_1,r_2 \in \mathbb{R}$ such that $r_1\neq r_2$.
		It is clear that \( f \) is continuous because it is an isometry, however $ f $ is not descriptively continuous since,
		$\{1\}\not{\delta_{\Phi}} \{\frac{3}{2}\}$ while $\{0\}  \delta_{\Phi} \{\frac{1}{2}\}$ for $A=\{0\}$, $B=\{\frac{1}{2}\} \in 2^\mathbb{R}.$

		%\subsubsection{Mathematics}
		%% Inline mathematics is tagged between $ symbols.
		%This is an example for the symbol $\alpha$ tagged as inline mathematics.
		
		%% Displayed equations can be tagged using various environments. 
		%% Single line equations can be tagged using the equation environment.
		%\begin{equation}
		%f(x) = (x+a)(x+b)
		%\end{equation}
		
		%% Unnumbered equations are tagged using starred versions of the environment.
		%% amsmath package needs to be loaded for the starred version of equation environment.
		%\begin{equation*}
		%(x) = (x+a)(x+b)
		%\end{equation*}
		
		%% align or eqnarray environments can be used for multi line equations.
		%% & is used to mark alignment points in equations.
		%% \\ is used to end a row in a multiline equation.
		%\begin{align}
		% f(x) &= (x+a)(x+b) \\
		%      &= x^2 + (a+b)x + ab
		%\end{align}
		 \section{A descriptive perspective on Devaney's chaos}

Throughout this paper, a discrete dynamical system $(X, f)$ together with a probe function $\Phi: X \rightarrow \mathbb{R}^n$ will be represented as a triple  $(X, f, \Phi)$ called  a descriptively dynamical system. Specifically if  a set $A=\{ a \}$ is singleton, one can prefer the term 'object' instead of 'set' so that $\Phi(\{a\})=\Phi(a)$.\linebreak In that case, 
		descriptive concepts can be given as follows:
		\begin{center}
			
			\begin{itemize}
				\item[(i)] The set denoted as $O_f(a,\Phi)$ represents the descriptive orbit of an object $a \in X$ under the function $f:X\rightarrow X$,	
				\begin{align*}
			   O_f(a,\Phi) &=\{\Phi(f^j(a)) \; | \; a \in X , j\in \mathbb{Z}^+\} \\ & =\{\Phi(a),\Phi(f(a)),\dots,\Phi(f^j(a)),\dots\}.
			\end{align*}
				
				\item[(ii)] A object $a \in X$ is a descriptive fixed object of the function $f$, provided that $\Phi(f(a)) = \Phi(a) $.
				
				\item[(iii)] A object $a \in X$ is a descriptive $m$-periodic object of $f$ for $m\in \mathbb{Z}^+$,  provided that  \[\Phi(f^{j}(a)) \neq \Phi(a) \text{ for } j=1,2,\dots,m-1 \text{ and }  \Phi(f^{m}(a)) = \Phi(a). \]
			\end{itemize}
		\end{center}
		
		\noindent The sets of periodic objects can generally be given as follows:  
		
		\begin{center}
			\begin{itemize}
				\item [(iv)]  $Per_m(f,\Phi)=\{a\in X \; | \; \Phi(f^m(a))=\Phi(a)\}$ is the set of descriptive $m$-periodic objects.
				\item  [(v)]
				$Per(f,\Phi)=\underset{m\in\mathbb{Z}^+}{\bigcup}Per_m(f,\Phi)=\{a\in X \; | \; \Phi(f^m(a))=\Phi(a), \; m\in \mathbb{Z}^+\}$ is the set of all descriptive periodic objects.
			\end{itemize}
		\end{center}
	 The question, 'If a chaotic system possesses descriptive properties, does the system's dynamism affect its own descriptive properties?', serves as the core motivation for our study.	Let $X$ be a topological space, and let the triple $(X, f, \Phi)$ a descriptive dynamical system. From this perspective, we will examine Devaney’s chaos conditions within this space and discuss the obtained results.

		\begin{lemma}\label{lem21} The density of the set of periodic objects of  $f$ implies the density of the set of descriptive periodic objects.
			\begin{proof}
				Let $U$ be a nonempty open subset of $X$. Since the set of periodic objects of the function $f$ is dense in $X$, there exists an $a \in U$ such that $f^k(a) = a$ for some integer $k > 0$. In this case, by the equality $\Phi(f^k(a)) = \Phi(a)$,  $a$ is also a descriptive periodic object of $f$, and hence the desired result is achieved.
			\end{proof}
		\end{lemma}

		\begin{definition}
			 $f$ is said to be descriptively transitive if for every nonempty open sets $U, V \subseteq X$, there exists an integer $k > 0$ such that
			\[f^k(U)\underset{\Phi}{\cap} V \neq\emptyset. \]
				\end{definition}
			\noindent This definition provides a framework to determine whether two open sets share identical descriptive features under iteration, despite not intersect in a classical topological sense (see Figure \ref{fig:des}).
			\begin{figure}[H]
				\centering
				\includegraphics[width=0.95\linewidth]{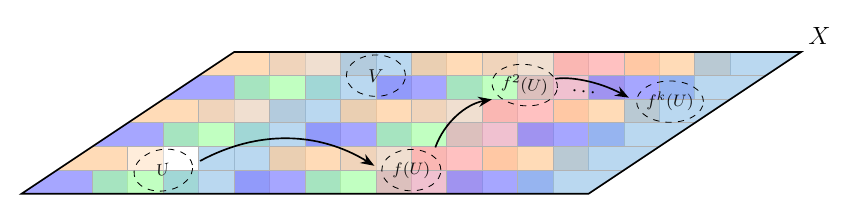}
				\caption{Descriptive transitivity.}
				\label{fig:des}
			\end{figure}

		\begin{lemma}	\label{lem22}
			
			Topological transitivity implies descriptive transitivity.
			\begin{proof}
				Let $X$ be a topological space, $(X, f, \Phi)$ descriptive  dynamical system and $U$ and $V$ be two non-empty open subsets of $X.$ Since $f$ is topologically transitive,  $f^k(U) \cap V\neq \emptyset$ for a postive integer $k>0$ . In this case, there exists a object $ b \in f^k(U) \cap V \subseteq f^k(U)\cup V $, which means that $ b $ is in both $f^k(U)$ and $V $. If $b \in f^k(U)$, then there exists $a \in U$ such that
				$f^k(a) = b$. Therefore, by equality $\Phi(f^k(a)) = \Phi(b)$ holds. Hence, $\Phi(f^k(U)) \cap \Phi(V) \neq \emptyset$, which implies that $f^k(U) \underset{\Phi}{\cap} V \neq \emptyset$.
			\end{proof}
		\end{lemma}

		% TODO: \usepackage{graphicx} required
		\begin{remark} \label{r1}
			The converses of Lemma~\ref{lem21} and Lemma~\ref{lem22} do not hold in general. To see this following Example \ref{exTQ} and Example \ref{exTI} are provided. Consider  the unit circle $S^1 \subset \mathbb{C}$ in the complex plane  and translations of circle map $T_\lambda : S^1 \rightarrow S^1$ , $T(\theta)=\theta + 2\pi\lambda$  (mod $2\pi)$ for $\lambda\in \mathbb{R}.$ This map has different properties when $\lambda$ is rational or irrational. 
			
				\begin{figure}[h]
				\centering
				\includegraphics[width=1\linewidth]{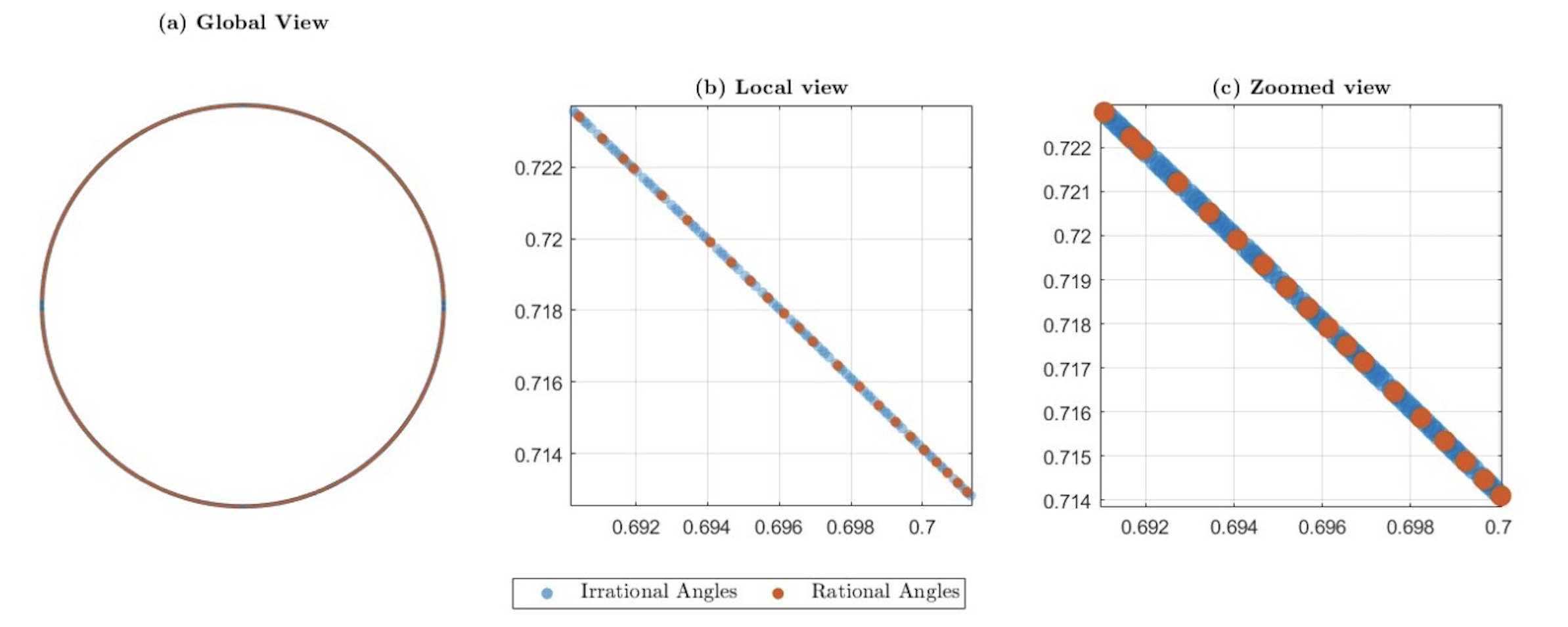}
				\caption{ Distributions of objects in $S^1$. }
				\label{fig:s2}
			\end{figure} 
			
	\noindent Let the unit circle $S^1$ be partitioned into four distinct sectors of equal length, defined as:
	\begin{equation*}
		W_1 = [0,\textstyle\frac{\pi}{2}), \quad W_2 = [\textstyle\frac{\pi}{2},\pi), \quad W_3= [\pi, \textstyle\frac{3\pi}{2}), \quad W_4= [\textstyle\frac{3\pi}{2}, 2\pi)
	\end{equation*}
	where  $S^1 = \bigcup_{i=1}^4 W_i$ and $W_i \cap W_j = \emptyset$ for $i \neq j$. Let define the probe function $\Phi: S^1 \rightarrow \mathbb{R}^n$,
	\begin{equation*}
		\Phi(\theta) = 
		\begin{cases} 
			v_1, & \text{if } \theta \in W_1 \\
			v_2, & \text{if } \theta \in W_2 \\
			v_3, & \text{if } \theta \in W_3 \\
			v_4, & \text{if } \theta \in W_4 
		\end{cases}
	\end{equation*}
		
			\end{remark}
		\noindent	in this respect,   the following examples will be examined.

\begin{example}\label{exTQ}

		Let $\lambda=\frac{1}{4}$, it is clear that  $\forall \theta \in S^1 $ is 4-periodic objects(points) of $T_{\frac{1}{4}}$ since 
	
		\begin{align*}
			T_{\frac{1}{4}}^4 (\theta) &= \theta + 4.(\frac{\pi}{2}) \; (\text {mod  } 2\pi) \\
			&=\theta + 2\pi \; (\text {mod  } 2\pi) \\
			& = \theta.
		\end{align*}
		
\noindent However, $T_{\frac{1}{4}}$ is not topologically transitive. To illustrate this  choose the open subsets $U=(\frac{-\pi}{20}, \frac{\pi}{20})$  and $V=(\frac{4\pi}{20},\frac{6\pi}{20})$ in $S^1 $. The first three iterations of $U$ are as follows

\[
U=(\frac{-\pi}{20}, \frac{\pi}{20}) \curvearrowright T(U)=(\frac{9\pi}{20},\frac{11\pi}{20}) \curvearrowright T^2(U)=(\frac{19\pi}{20}, \frac{21\pi}{20}) \curvearrowright T^3(U)= (\frac{29\pi}{20},\frac{31\pi}{20})
 \] and one can verify  that the iterates of $U$ never overlaps with $V$.
 On the other hand $T_{\frac{1}{4}}$ is descriptively transitive: To show this 
let $U$ and $V$ be non-empty open subsets of $S^1$ and define the following index sets:
\[
I_U= : \{ i \in \{1,2,3,4\} \; | \; U \cap W_i \neq \emptyset \}.
\]
 and 
 \[
I_V= :\{ i \in \{1,2,3,4\} \; | \; V \cap W_i \neq \emptyset \}
\]
Also, let $k_1=max \;I_U$, $k_2 = min\; I_V$ and set $
k= k_2 - k_1 \; (mod \;4$). Thus, for $\theta \in U\cap W_{k_1}$, we have  $T^k_{\frac{1}{4}}(\theta)\in W_{k_2}$. This implies    that  $T^k_{\frac{1}{4}}(U)$ and $V$ intersect within the same sector, which satisfies  
\[ \Phi(T^k_{\frac{1}{4}}(U)) \cap \Phi(V) \neq \emptyset. \] 
Consequently, $T^k_{\frac{1}{4}}(U) \underset{\Phi}{\cap }V \neq \emptyset$, that is,  $T_{\frac{1}{4}}$ is descriptively transitive.
  
		\end{example}
		\begin{example}\label{exTI} It is known that $T_{\lambda}$ does possess a dense set of periodic points for any irrational $\lambda$.
Let $\lambda$ be an irrational number such that \linebreak $\frac{1}{4}< \lambda < \frac{3}{4}$.  While $\lambda$ is irrational, the orbit  $\{T_\lambda^n(\theta) \: | \: \theta\in S^1, \; n\in \mathbb{Z}^+\}$ is dense in $S^1$, a result known  as Jacobi's Theorem. Let $U$ be non empty open subset of $S^1$. Since $U$ is non empty, there exists $i_0 \in \{1,2,3,4\}$ such that  $U\cap W_{i_0} \neq \emptyset$. Let  $\mathring{ W_{i_0}}$   denote the interior set of  $W_{i_0}$. Since  both $U$ and $ \mathring{ W_{i_0}}$ are open, for any $\theta \in U\cap \mathring{ W_{i_0}}$ there exists $\delta >0$ such that 
\[ B(\theta,\delta)\subset U\cap \mathring{ W_{i_0}}.
\]
where $B(\theta,\delta)= \{e^{it} \; | \; \theta-\delta < t < \theta +\delta \}$ is an open arc. Furthermore, because $B(\theta,\delta)$ is an open set and $\{T_\lambda^n(\theta) \: | \: \theta\in S^1, n\in \mathbb{Z}^+ \}$ is dense, there exists
$k \in \mathbb{Z}^+$ such that 
\[ 
T_\lambda^k(\theta) \in B(\theta,\delta) .
\]
Moreover, $\Phi(T_\lambda^k(\theta))= \Phi(\theta)=v_{i_0}\in \mathbb{R}^n $,  since both $T_\lambda^k(\theta)$ and $\theta$ belong to $	\mathring{ W_{i_0}}$. Consequently, $\theta$ is a descriptive $k$-periodic object of $T_\lambda$ in $U$.
  		\end{example}
  \noindent	Similarly, for suitable values of $\lambda$, the approach used in Example \ref{exTQ} and Example \ref{exTI} can be applied to $n$-equal distinct sectors $W_1,W_2, \dots W_n$.

			% TODO: \usepackage{graphicx} required

		\begin{definition}
			
			The function $f$ is said to be descriptively sensitive if there exists a number $\delta > 0$ such that for every object $a \in X$ and for every neighborhood $N$ of $a$, there exists an object $b$ in the neighborhood $N$ and a positive integer $k$ such that 
			 
			\[
			||\Phi(f^k(a)) - \Phi(f^k(b))|| > \delta.
			\]
		\end{definition}
\noindent While two trajectories may diverge significantly in terms of spatial distance, nevertheless they can  preserve their descriptive properties. The following remark is provided to illustrate this phenomenon.
		\begin{remark}	\label{doubling} 
			\noindent In metric spaces, sensitive dependence on initial conditions for a map $f$ does not necessarily imply descriptive sensitivity. 
				\end{remark}
				
	\noindent The following basic example can be given to illustrate this.			
				
\begin{example} \label{doublingsensitve}
Consider the doubling map $D$, a well-known example of a chaotic map on the unit circle, which is a subspace of the complex plane. This map is also known as the restriction of the map $F: \mathbb{C}\rightarrow \mathbb{C}, F(z)=z^2$,

			\[
			\begin{aligned}
				D: S^1 &\longrightarrow S^1 \\
				\theta &\longmapsto D(\theta) = 2\theta \pmod{2\pi}
			\end{aligned}
			\]
			together with constant probe function $\Phi: S^1\rightarrow \mathbb{R}^n,\Phi(\theta)=c\in \mathbb{R}^n$.
			\newline 
			For a fixed $\delta > 0$, let $\alpha \in S^1$ and $\beta \in N$ where $N$ is a neighborhood of $\alpha$ and $\alpha \neq \beta$. By the definition of $\Phi$
			\begin{eqnarray}
				||\Phi(D^k(\alpha))- \Phi(D^k(\beta)) ||  &=&  ||\Phi(2^k\alpha)- \Phi(2^k\beta)|| \nonumber\\ %% If equation numbering is not needed for a row use \nonumber.
				&=&||c-c||  \nonumber \\
				&=& 0 \ngtr \delta  \nonumber .
			\end{eqnarray}
		\end{example}
	
		\begin{corollary}\label{c1}
			Unlike the Banks' Theorem in the classical sense, the descriptive transitivity of a map and the density of its descriptive periodic objects set do not necessarily imply that  map is descriptively sensitive.
		\end{corollary}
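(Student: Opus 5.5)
The claim is a non-implication, so a single counterexample settles it. I will use the descriptive dynamical system $(S^1, D, \Phi)$ of Example~\ref{doublingsensitve}: the doubling map $D(\theta)=2\theta \pmod{2\pi}$ together with a constant probe function $\Phi\equiv c\in\mathbb{R}^n$. The plan has three steps: show that $D$ is descriptively transitive, show that its descriptive periodic objects are dense, and show that $D$ is not descriptively sensitive.

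\textbf{Descriptive transitivity.} I have two routes, and either suffices.
\begin{enumerate}
\item[(a)] Use the classical fact that the doubling map is topologically transitive and invoke Lemma~\ref{lem22}.
\item[(b)] Argue directly. For nonempty open $U,V\subseteq S^1$ and any $k>0$, the sets $D^k(U)$ and $V$ are nonempty. Since $\Phi$ is constant, $\Phi(D^k(U))\cap\Phi(V)=\{c\}\neq\emptyset$. Hence $D^k(U)\underset{\Phi}{\cap}V$ contains all of $D^k(U)\cup V$ and is nonempty.
\end{enumerate}
I prefer route (b), because it is self-contained and depends only on the constancy of $\Phi$.

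\textbf{Density of descriptive periodic objects.} Here also there are two routes.
\begin{enumerate}
\item[(a)] The periodic points of $D$ are exactly the angles $2\pi p/(2^m-1)$. These are dense in $S^1$, so Lemma~\ref{lem21} applies.
\item[(b)] Argue directly. Every $\theta\in S^1$ satisfies $\Phi(D(\theta))=c=\Phi(\theta)$, so $Per(f,\Phi)=S^1$, which is trivially dense.
\end{enumerate}

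\textbf{Failure of descriptive sensitivity.} This is exactly the computation in Example~\ref{doublingsensitve}. For every $\delta>0$, every $a$, every neighbourhood $N$, every $b\in N$ and every $k$, we have $\|\Phi(D^k(a))-\Phi(D^k(b))\|=\|c-c\|=0$, which is never greater than $\delta$. Negating the quantifiers, no $\delta>0$ witnesses sensitivity, so $D$ is not descriptively sensitive.

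Combining the three steps, $D$ is descriptively transitive and has a dense set of descriptive periodic objects, yet it is not descriptively sensitive. This proves the corollary. It also makes the contrast with Banks' Theorem explicit, since $D$ is classically transitive with dense periodic points and hence classically sensitive.

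I expect no real obstacle. The only point needing care is to confirm that descriptive transitivity needs nonemptiness of $D^k(U)\underset{\Phi}{\cap}V$ and nothing else, which holds because both sets are nonempty. If a reader objects that a constant probe is degenerate, I would add a remark that any $\Phi$ that is constant on a set containing all forward orbits gives the same conclusion.
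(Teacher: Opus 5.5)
Your proposal is correct and is essentially the paper's argument: the corollary rests on Example~\ref{doublingsensitve}, the doubling map with a constant probe. The paper obtains its descriptive transitivity and dense descriptive periodic objects from the classical properties of $D$ via Lemmas~\ref{lem21} and~\ref{lem22}, which is exactly your routes (a); your preferred routes (b), a direct check from the constancy of $\Phi$, are equally valid and show that any self-map with a constant probe would serve, the doubling map being needed only for the contrast with classical sensitivity.
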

			\subsection{A computational implementation}
% TODO: \usepackage{graphicx} required

 One of the encryption methods for images is Arnold’s Cat transformation, which rearranges the pixels of a given image \cite{alexan2025secure}. This transformation not only rearranges the pixels in a complex manner but also restores the original image after a specific number of iterations, known as the Arnold period, which depends on the image’s dimensions $N$ \cite{Dyson1992}. This transformation can be expressed as follows:

$$
\begin{pmatrix} 
	a_{n+1} \\ 
	b_{n+1} 
\end{pmatrix} = 
\begin{pmatrix} 
	1 & 1 \\ 
	1 & 2 
\end{pmatrix} 
\begin{pmatrix} 
	a_n \\ 
	b_n 
\end{pmatrix} \pmod{N}, 
$$
In this equation, $a$ and $b$ represent the coordinates of a pixel in the image. Arnold's Cat transformation employs matrix multiplication to stretch the $a$ and $b$ coordinates, followed by the modulo operator to fold and reconstruct the image \cite{Chen2004}. 
\noindent Consider an image of peppers with a resolution of $256\times 256$ pixels, taken by MATLAB library. In classical computational dynamics applications, the trajectories of pixels take center stage, while the descriptive perspective is often neglected. This results in a shallow approach to image analysis.  \noindent To show this, let $\Phi$ assign RGB color vectors to pixels and consider two pixels, purple colored $p_1 = (32, 32)$ and $p_2 = (32, 33)$, which rapidly diverge spatially under Arnold’s Cat Map transformation. 

 \begin{figure}[H]
	\centering
	\includegraphics[width=0.92\linewidth]{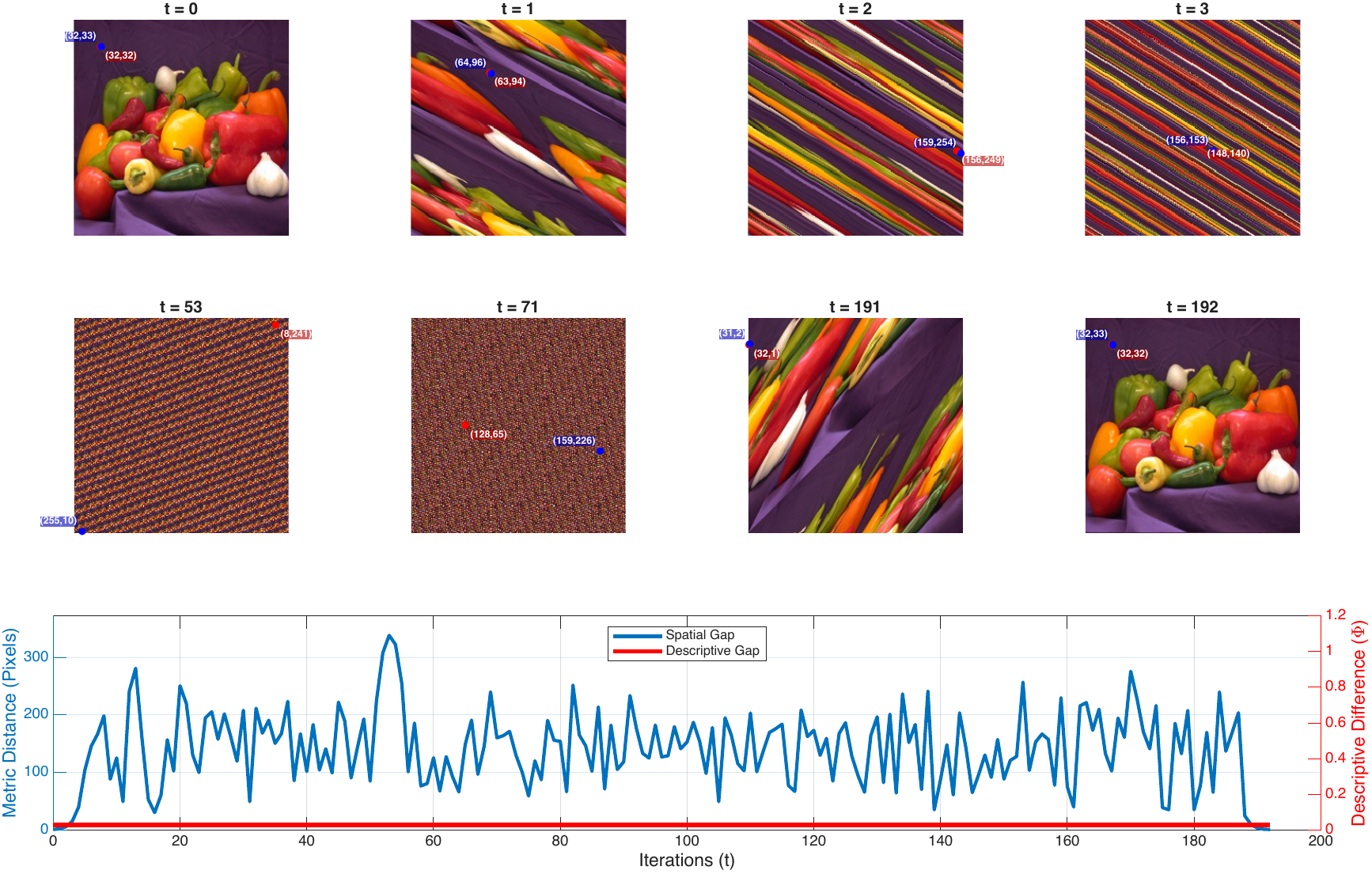}
	\caption{Classical approach to dynamical systems. }
	\label{fig:desstab}
\end{figure}
\noindent Despite the chaotic spatial evolution of the pixels, their descriptive attributes remain invariant due to the fixed background information. 
Thus, the system maintains a form of stability where proximity is preserved in the descriptive sense, even as metric distances increase between pixels. As illustrated in Figure~\ref{fig:desstab}, the descriptive gap remains constant despite a substantial increase in metric distance, most notably iteration at $t = 53$.  Consistent with the system's periodicity, both points restore their initial positions and values by iteration $t = 192$ just like others.\\

% TODO: \usepackage{graphicx} required 

\begin{figure}[H]
	\centering
	\includegraphics[width=0.92\linewidth]{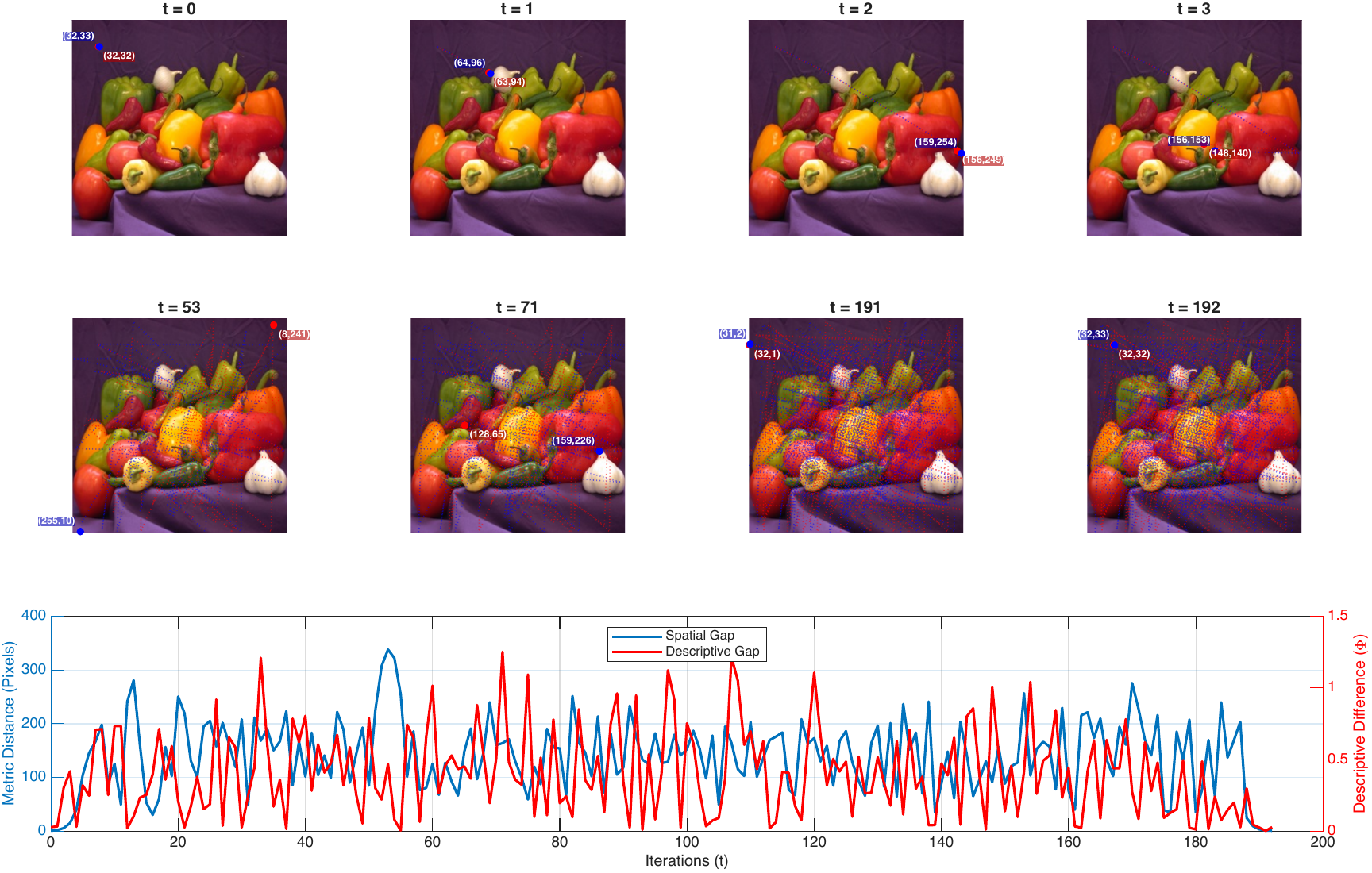}
	\caption{Topological chaos and descriptive instabilities.}
	\label{fig:desnstabb}
\end{figure}
\noindent Conversely, our theoretical framework examines both the spatial evolution and the descriptive values at each new coordinate as the pixels are iterated by the Arnold Cat Map. This perspective allows to evaluate whether or not descriptive proximity remains stable during the Arnold Cat Map iterations, even under significant spatial divergence.
\noindent By evaluating the descriptive value via using probe $\Phi$ at each iterated position of the pixels, a form of descriptive instability emerges. 
The gap between the descriptive values of these two moving pixels is especially maximized at iteration $t=71$. 

\noindent These implementations demonstrate that while the background remains the same, the descriptive relationship between the moving pixels becomes instable as they traverse the image see Figure \ref{fig:desnstabb}.
\noindent Consequently, this approach facilitates a more comprehensive image analysis by simultaneously capturing spatial  and descriptive instabilities.

		\section{Some results on topologically conjugate systems}
	Let $(X, f, \Phi$) and $(Y, g, \Psi)$ be descriptive topological dynamical systems. Assume that the functions $f$ and $g$ are topologically semi-conjugate via a continuous and surjective function $h: X \to Y$ (see Figure \ref{diagram}). In this situation
	the following lemmas are provided.
	
	\begin{lemma}
 If $f$ is descriptively transitive on $X$ and $h$  is  descriptively continuous, then $g$ is also descriptively transitive on $Y$.
\begin{proof}
Let $U_Y$ and $V_Y$ be non-empty open subsets of $Y$. Since the function $h$ is continuous and surjective,
$h^{-1}(U_Y) = U_X \subseteq X$ and $h^{-1}(V_Y) = V_X \subseteq X$ are non-empty and
open subsets. Since $f$ is descriptively transitive on $X$, there exists a  $k \in \mathbb{Z}^+$ such that $f^k(U_X) \underset{\Phi}{\cap} V_X\neq \emptyset$.  In this case  there exists    $a\in f^k(U_X) \cup V_X$  so that \[\Phi(a)\in \Phi( f^k(U_X) )\cap \Phi(V_X)\]
This also shows us that $f^k(U_X) \delta_{\Phi}V_X$. Then $h(a)\in h(f^k(U_X)) \cup h(V_X)$, by equality $h(f^k(U_X)\cup V_X)=h(f^k(U_X))  \cup h(V_X)$. Since $f$ and $g$ are topological semi-conjugate, 
\[h(a)\in g^k(h(U_X))\cup h(V_X)=g^k(U_Y)\cup V_Y \] and by our assumption  we get  \[g^k(U_Y)\delta_{\Psi} V_Y\]  
i.e., $\Psi(h(a)) \in \Psi(g^k(U_Y))\cap \Psi(V_Y)$, which means that $g^k(U_Y)\underset{\Psi}{\cap} V_Y\neq \emptyset$ as  desired.
 \end{proof}
		\end{lemma}

		\begin{lemma}
If the set of descriptive periodic objects of $f$ is dense in $X$ and $h$ is descriptively continuous, then the set of descriptive periodic objects of $ g$  is also dense in the space $Y $.
\begin{proof}
	Let $U_Y\subseteq Y$  be a non-empty open subset. Since $h$ is continuous and surjective $h^{-1}(U_Y)= U_X\subseteq X$ is non-empty and open.
	As the descriptive periodic objects of the function $f$ are dense in $X$, there exists an $a \in U_X$ such that $\Phi(f^k(a)) = \Phi(a)$ for some $k\in \mathbb{Z}^+$ . If $a \in U_X$ then $h(a) \in h(U_X)=U_Y$. Additionaly, because of  $f$ and $g$ are topological semi-conjugate and by our assumption,
	$\Psi(h(a))=\Psi (h(f^k(a))= \Psi(g^k(h(a)))$ equalitiy satisfies.
\end{proof}
		\end{lemma}
			\begin{figure}[H]
			\centering
			\includegraphics[width=0.6\linewidth]{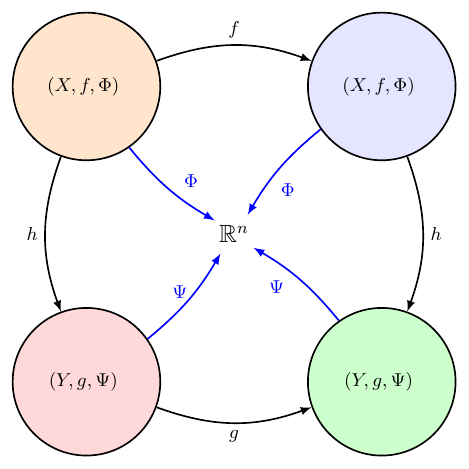}
			\caption{Conjugate systems.}
			\label{diagram}
		\end{figure}
		
	\noindent \textbf{Discussion} In contrast to the well-known Banks' theorem in classical topological dynamics, we show that descriptive transitivity and the density of descriptive periodic objects do not necessarily imply descriptive sensitivity. This distinction reveals that the descriptive proximity structure imposes a stricter or more specific constraint on the system’s behavior than the standard metric topology. Furthermore, descriptive continuity, a concept distinct from usual continuity, helps us to see that certain properties are preserved under topological conjugacy, as expected in a similar manner.

%% The Appendices part is started with the command \appendix;
%% appendix sections are then done as normal sections

%% For citations use: 
 % \cite{<label>} ==> [1]

%%

%% If you have bib database file and want bibtex to generate the
%% bibitems, please use
  
%% else use the following coding to input the bibitems directly in the
%% TeX file.

%% Refer following link for more details about bibliography and citations.
%% https://en.wikibooks.org/wiki/LaTeX/Bibliography_Management
%\begin{thebibliography}{00}
	
	%% For authoryear reference style
	%% \bibitem[Author(year)]{label}
	%% Text of bibliographic item
	
	%\bibitem[Lamport(1994)]{lamport94}
	%Leslie Lamport,
	%\textit{\LaTeX: a document preparation system},
	%Addison Wesley, Massachusetts,
	%2nd edition,
	%1994.
	
%\end{thebibliography}

\bibliographystyle{elsarticle-num} 
\bibliography{referances} 

@article{banks1992devaney,
	title={On Devaney's definition of chaos},
	author={Banks, John and Brooks, Jeffrey and Cairns, Grant and Davis, Gary and Stacey, Peter},
	journal={The American mathematical monthly},
	volume={99},
	number={4},
	pages={332--334},
	year={1992},
	publisher={Taylor \& Francis}
}

@article{Di_Concilio2018-oy,
	title = {Descriptive proximities: {P}roperties and interplay between classical proximities and overlap},
	author = {Di Concilio, A. and Guadagni, C. and Peters, J. F. and Ramanna, S.},
	journal = {Mathematics in Computer Science},
	volume = {12},
	number = {1},
	pages = {91--106},
	month = {mar},
	year = {2018},
	publisher = {Springer}
}

@book{devaney2018introduction,
	title={An introduction to chaotic dynamical systems},
	author={Devaney, Robert},
	year={2018},
	publisher={CRC press}
}

@book{naimpally2013topology,
	title={Topology with applications: topological spaces via near and far},
	author={Naimpally, Somashekhar A and Peters, James F},
	year={2013},
	publisher={World Scientific}
}

@article{peters2007near,
	title={Near sets. General theory about nearness of objects},
	author={Peters, James F and others},
	journal={Applied Mathematical Sciences},
	volume={1},
	number={53},
	pages={2609--2629},
	year={2007}
}

@article{HAIDER2021111237,
	title = {Temporal proximities: Self-similar temporally close shapes},
	journal = {Chaos, Solitons \& Fractals},
	volume = {151},
	pages = {111237},
	year = {2021},
	issn = {0960-0779},
	author = {Muhammad Shangol Haider and James F. Peters}
}

@book{morris1989topology,
	title={Topology without tears},
	author={Morris, Sidney A},
	year={1989},
	publisher={University of New England}
}

@article{LIANG2025116826,
	title = {A new high-dimensional digital chaotic system and its S-box application},
	journal = {Chaos, Solitons \& Fractals},
	volume = {199},
	pages = {116826},
	year = {2025},
	issn = {0960-0779},
	author = {Runfa Liang and Qianxue Wang and Ying Li}
}

@article{MEZZI2025109571,
	title = {Chaos of trajectories},
	journal = {Topology and its Applications},
	volume = {375},
	pages = {109571},
	year = {2025},
	issn = {0166-8641},
	author = {Seif Mezzi and Khadija {Ben Rejeb}}
}

@book{book,
	author = {Peters, James},
	year = {2016},
	month = {04},
	pages = {},
	title = {Computational Proximity. Excursions in the Topology of Digital Images.},
	isbn = {ISBN 978-3-319-30260-7, ISBN 978-3-319-30262-1 (eBook)}
}

@article{alexan2025secure,
	title={A secure and efficient image encryption scheme based on chaotic systems and nonlinear transformations},
	author={Alexan, Wassim and Shabasy, Noura H El and Ehab, Noha and Maher, Engy Aly},
	journal={Scientific Reports},
	volume={15},
	number={1},
	pages={31246},
	year={2025},
	publisher={Nature Publishing Group UK London}
}

@article{Chen2004,
	title={A symmetric image encryption scheme based on 3D chaotic cat maps},
	author={Chen, Guanrong and Mao, Yongmao and Chui, Charles K.},
	journal={Chaos, Solitons \& Fractals},
	volume={21},
	number={3},
	pages={749--761},
	year={2004},
	publisher={Elsevier}
}

@article{Dyson1992,
title={The period of Arnold's cat map},
author={Dyson, Freeman J. and Falk, Harold},
journal={The American Mathematical Monthly},
volume={99},
number={7},
pages={603--614},
year={1992},
publisher={Taylor \& Francis}
}

@book{Arnold1968,
	title = {Ergodic Problems of Classical Mechanics},
	author = {Arnold, Vladimir I. and Avez, Andre},
	publisher = {W. A. Benjamin},
	address = {New York},
	year = {1968},
	note = {English translation of the original French edition}
}

@article{naimpally2013foreword,
	title={Foreword [near set theory and applications]},
	author={Naimpally, S and Peters, J and Wolski, M},
	journal={Math. Comput. Sci},
	volume={7},
	number={1},
	pages={1--2},
	year={2013}
}

@article{peters2013near,
	title={Near sets: An introduction},
	author={Peters, James F},
	journal={Mathematics in Computer Science},
	volume={7},
	number={1},
	pages={3--9},
	year={2013},
	publisher={Springer}
}
\end{document}